\definecolor{red}{rgb}{1,0,0}
\newtheorem{prop}[subsection]{Proposition}
\newtheorem{lemma}[subsection]{Lemma}
\newtheorem{thm2}{Theorem}
\theoremstyle{definition}  
\newtheorem{define}[subsection]{Definition}
\newtheorem{example}[subsection]{Example}
\newtheorem{remark}[subsection]{Remark}
\newcommand{\field}[1]  {\mathbb #1} 
\newcommand{\A}         {\field A}
\newcommand{\R}         {\field R}
\newcommand{\Z}         {\field Z}
\newcommand{\C}         {\field C}
\newcommand{\Q}         {\field Q}
\renewcommand{\P}       {\field P}
\DeclareMathOperator{\Spec}{Spec}
\DeclareMathOperator{\Hom}{Hom}
\DeclareMathOperator{\shE}{\mathcal{E}}
\DeclareMathOperator{\shF}{\mathcal{F}}
\DeclareMathOperator{\shG}{\mathcal{G}}
\DeclareMathOperator{\shO}{\mathcal{O}}
\newcommand{\period}    {{\makebox[0pt][l]{\hspace{2pt} .}}}
\newcommand{\comma}     {{\makebox[0pt][l]{\hspace{2pt} ,}}}
\newcommand{\F}{\mathbb{F}}
\numberwithin{equation}{subsection}
\begin{document}

\title[Sheaves on K3 surfaces and Galois representations]{Moduli spaces of sheaves on K3 surfaces and Galois representations}

\author{Sarah Frei}

\maketitle

\begin{abstract}
We consider two K3 surfaces defined over an arbitrary field, together with a smooth proper moduli space of stable sheaves on each. When the moduli spaces have the same dimension, we prove that if the \'etale cohomology groups (with $\Q_\ell$ coefficients) of the two surfaces are isomorphic as Galois representations, then the same is true of the two moduli spaces. In particular, if the field of definition is finite and the K3 surfaces have equal zeta functions, then so do the moduli spaces, even when the moduli spaces are not birational.
\end{abstract}

\section*{Introduction}
Given a K3 surface $S$ defined over an arbitrary field $k$, we can study moduli spaces $M$ of stable sheaves on $S$ with fixed Chern classes. Under mild conditions on the Chern classes, each such moduli space is a smooth, projective, geometrically irreducible variety with a natural symplectic structure. The best-studied example of such a moduli space is the Hilbert scheme of points, $S^{[n]}$, parameterizing zero-dimensional subschemes of length $n$ in $S$. These spaces have been well-studied over $\C$ because they are one of the few known families of compact hyperk\"ahler manifolds. It is a well-known result due to Huybrechts \cite{Huy}, O'Grady \cite{OG97} and Yoshioka \cite{Y}, recently summarized in \cite{PR}, that when $k=\C$ such a moduli space $M$ is actually deformation equivalent to $S^{[n]}$ for $n=\frac{1}{2}\dim M$, and this result was recently generalized to arbitrary fields by Charles in his proof of the Tate conjecture for K3 surfaces over finite fields \cite{C}. However, these moduli spaces are typically not birational to the Hilbert scheme.

For a projective variety $X$ defined over a finite field, let $Z(X,t)$ denote the zeta function of $X$. We prove here that the zeta function of $M$ is determined by the zeta function of $S$.

\begin{thm2}
\label{zeta}
Let $S_1$ and $S_2$ be K3 surfaces defined over a finite field such that $Z(S_1, t)=Z(S_2, t)$. Let $M_1$ and $M_2$ be smooth proper moduli spaces of stable sheaves on $S_1$ and $S_2$, respectively, with $\dim {M}_1=\dim {M}_2$. Then $Z({M}_1,t)=Z(M_2,t)$.
\end{thm2}

Since any two such moduli spaces need not be birational, the equality in Theorem \ref{zeta} is surprising. In particular, there need not be a geometric map between the moduli spaces that realizes this equality in point-counts over finite fields.

Consider the case where $S_1=S_2$. When the moduli space ${M}$ is fine and two-dimensional, ${M}$ is a K3 surface derived equivalent to the original K3 surface. In this case, our result about zeta functions for two moduli spaces on a fixed K3 surface was already proved by Lieblich and Olsson \cite[Thm.\ 1.2]{LO} and independently by Huybrechts \cite[Prop.\ 16.4.6]{HuyK3}. We extend their result to also hold when ${M}$ is not a fine moduli space. Their work was also generalized by Honigs \cite{Ho} to hold for any derived equivalent surfaces. In higher dimensions, it is an open question whether any two moduli spaces corresponding to a given K3 surface, under possible conditions on Chern classes, are derived equivalent once their dimensions coincide. If we speculate for a moment that they are \cite[Ch.\ 10 Questions and open problems]{HuyK3}, then our result is consistent with Orlov's conjecture that derived equivalent smooth, projective varieties have isomorphic motives with rational coefficients \cite[Conj.\ 1]{Or}. In particular, this conjecture would imply that derived equivalent smooth, projective varieties over a finite field have equal zeta functions. 
On the other hand, if we suppose instead that there are two such moduli spaces of the same dimension which are not derived equivalent, our result suggests that for this family of varieties, the zeta function is a very coarse invariant.

By the Lefschetz trace formula, the zeta function is determined by the action of the Frobenius endomorphism on the cohomology ring. Thus we will deduce Theorem \ref{zeta} from the following more general statement. Let $\ell$ be a prime different from the characteristic of $k$, and for any of the varieties $X$ below, let $\overline{X}=X\times_k\overline{k}$ where $\overline{k}$ is the algebraic closure of $k$.

\begin{thm2}
\label{mainthm}
Let $S_1$ and $S_2$ be K3 surfaces defined over an arbitrary field $k$ such that $H^2_{\acute{e}t}(\overline{S}_1, \Q_\ell)\cong H^2_{\acute{e}t}(\overline{S}_2,\Q_\ell)$ as $\mathrm{Gal}(\overline{k}/k)$-representations. Additionally, let ${M}_1$ and ${M}_2$ be smooth proper moduli spaces of stable sheaves on $S_1$ and $S_2$, respectively, with $\dim {M}_1=\dim {M}_2$. Then for all $i\geq 0$, $H^{i}_{\acute{e}t}(\overline{{M}}_1, \Q_{\ell})\cong H^{i}_{\acute{e}t}(\overline{{M}}_2, \Q_{\ell})$ as $\mathrm{Gal}(\overline{k}/k)$-representations.
\end{thm2}

We remark that when the moduli spaces are fine, the isomorphism $H^{2}_{\acute{e}t}(\overline{{M}}_1, \Q_{\ell})\cong H^{2}_{\acute{e}t}(\overline{{M}}_2, \Q_{\ell})$ follows almost immediately from the work of Charles \cite{C}, who built off of work done by O'Grady \cite{OG97} over the complex numbers. We extend their results to non-fine moduli spaces, and then the bulk of the work required to prove Theorem \ref{mainthm} is to construct the Galois-equivariant isomorphisms for the higher cohomology groups. 

\subsection*{Outline} In Section \ref{lattice} we introduce notation and definitions in order to define the moduli space $M$ of stable sheaves on a K3 surface $S$, and we show that it is a smooth, projective, geometrically irreducible variety. We show in Section \ref{orthog} that $H^2(\overline{{M}}, \Z_\ell(1))$ is isometric to a specific sublattice in $H^*(\overline{S}, \Z_\ell)$ and in Section \ref{i=1} that, after tensoring with $\Q_\ell$, the same sublattice can be identified with a fixed sublattice of $H^*(\overline{S}, \Q_\ell)$, which depends only on the dimension of $M$. In Section \ref{single}, we reduce to the case of considering just one K3 surface $S$ and comparing ${M}$ to the Hilbert scheme $S^{[n]}$. In Section \ref{theiso}, we complete the proof of Theorem \ref{mainthm} by constructing a Galois equivariant ring isomorphism between the cohomologies of the two moduli spaces ${M}$ and $S^{[n]}$.

\subsection*{Acknowledgments} I thank my advisor, Nicolas Addington, for supporting me and teaching me the material necessary to complete this project. Thanks also to Adrian Langer, Fran\c{c}ois Charles, Max Lieblich, Eduard Looijenga, Valery Lunts, Mark de Cataldo, Luc Illusie, Andy Putman, and Katrina Honigs for the helpful discussions and correspondences, and the referee who suggested many improvements.

\section{The moduli space}
\label{lattice}

Let $S$ be a K3 surface defined over an arbitrary field $k$ with algebraic closure $\overline{k}$, and let $\overline{S}=S\times_k \overline{k}$. The notion of a Mukai lattice makes sense in any Weil cohomology theory and is discussed in great generality in \cite{LO} as well as \cite{Ho}. We will make use of Mukai's original construction over $\C$ as well as the construction for \'etale cohomology, which we recall here.

\begin{define}
\label{latticedef}
Let $\ell$ be a prime different from the characteristic of $k$. The \emph{$\ell$-adic Mukai lattice} of $S$ is the $\mathrm{Gal}(\overline{k}/k)$-module
$$\widetilde{H}(\overline{S}, \Z_\ell):= H^0(\overline{S}, \Z_\ell)\oplus H^2(\overline{S}, \Z_\ell(1)) \oplus H^4(\overline{S}, \Z_\ell(2))$$
endowed with the Mukai pairing
$$(\alpha, \beta) = -\alpha_0.\beta_4 + \alpha_2.\beta_2-\alpha_4.\beta_0.$$
\end{define}

Note that we have defined the Mukai lattice in weight zero but will continue to use the usual sign on the Mukai pairing.

\begin{define}
For a coherent sheaf $\shF$ on $S$, the \emph{Mukai vector of $\shF$} is
$$v(\shF):= \text{ch}(\shF) \sqrt{\text{td}(S)} =(\text{rk}\shF, c_1(\shF), \chi(\shF)-\text{rk}\shF)\in \widetilde{H}(\overline{S}, \Z_\ell).$$
\end{define}

For two coherent sheaves $\shF$ and $\shG$, the Euler pairing and the Mukai pairing are related by $\chi(\shF, \shG)=-(v(\shF), v(\shG))$. Mukai vectors of sheaves are elements of the following subgroup of $\widetilde{H}(\overline{S}, \Z_\ell)$.

\begin{define}
Let $\omega \in H^4(\overline{S}, \Z_\ell(2))$ be the numerical equivalence class of a point on $\overline{S}$. A \emph{Mukai vector on $S$} is an element of
$$N(S):=\Z\oplus \mathrm{NS}(S)\oplus \Z\omega,$$ 
and $N(S)$ is considered as a subgroup of $\widetilde{H}(\overline{S}, \Z_\ell)$ under the natural inclusion. A Mukai vector is often denoted by $v=(r, c_1, s)$.
\end{define}

Given a Mukai vector $v$ on $S$ and an ample class $H$ in NS$(S)$, 
we can form the moduli space ${M}_H(S, v)$ of Gieseker geometrically $H$-stable sheaves $\mathcal{F}$ on $S$ such that $v(\mathcal{F})=v$. These spaces were originally constructed over algebraically closed fields in \cite{Maruyama} and \cite{Gieseker}. When the notation is clear, we will simply write ${M}$ or $M(v)$ in place of ${M}_H(S, v)$. By \cite[Thm.\ 0.2]{La}, ${M}$ is a quasi-projective scheme of finite type over $k$. In order for the moduli space to be a non-empty, smooth, projective variety, we will require the Mukai vector to satisfy the following conditions.

\begin{define}
\label{geoprim}
A Mukai vector $v \in N(S)$ is \emph{geometrically primitive} if its image under $N(S) \to N(\overline{S})$ is primitive.
\end{define}

Geometrically primitive is the same as primitive when Br$(k)=0$, or when $S$ has a zero-cycle of degree one (for example, a $k$-point), in which case there is an isomorphism Pic$(S) \xrightarrow{\sim} \text{Pic}(\overline{S})^{\mathrm{Gal}(\overline{k}/k)}$ coming from the Hochschild-Serre spectral sequence \cite[Eq.\ 18.1.10 and 18.1.13]{HuyK3}.

\begin{define}
\label{effective}
A Mukai vector $v=(r,c_1,s)\in N(S)$ is \emph{effective} if $r>0$, or if $r=0$ and $c_1$ is effective, or if $r=c_1=0$ and $s>0$.
\end{define}

These conditions are necessary to ensure that $M(v)$ is non-empty.

\begin{define}
\label{generic}
If $v$ is geometrically primitive, a polarization $H\in \mathrm{Pic}(S)$ is \emph{v-generic} if every $H$-semistable sheaf is geometrically stable with respect to $H$.
\end{define}

On $\overline{S}$, if $r>0$ or $r=0$ and $s\neq 0$, there are many choices of $v$-generic polarizations. Any polarization which is not contained in the locally finite union of hyperplanes in NS$(\overline{S})_{\R}$ discussed in \cite[App.\ 4.C]{HuyLehn} is $v$-generic, by \cite[Thm.\ 4.C.3]{HuyLehn} for $r>0$ and \cite[Sec.\ 1.4]{Y} for $r=0$ and $s\neq 0$. However, it is possible that NS$(S)\subset \mathrm{NS}(\overline{S})$ is contained entirely in one of the walls in NS$(\overline{S})_{\R}$, resulting in the existence of properly semistable sheaves. Here we give an explicit example of a K3 surface $S$ and a Mukai vector $v$ with $r>0$ for which no $v$-generic polarization exists. 

\begin{example}
Let $S$ be the degree two K3 surface defined over $\F_3$ cut out by 
\begin{align*}
w^2=&2y^2(x^2+2xy+2y^2)^2+(2x+z)(x^5+x^4y+x^3yz+x^2y^3+x^2y^2z \\
	&+2x^2z^3+xy^4+2xy^3z+xy^2z^2+y^5+2y^4z+2y^3z^2+2z^5)
\end{align*}
in $\P(3,1,1,1)$ which is given in \cite[Section 5]{HassVA}. In the proof of Proposition $5.5$ in \cite{HassVA}, Hassett and V\'arilly-Alvarado show that $\mathrm{NS}(S)=\Z H$, where $H$ is the preimage of a hyperplane from $\P^2$, and that $\mathrm{NS}(\overline{S})=\Z C_1 \oplus \Z C_2$, where $C_1$ and $C_2$ are $-2$-curves defined over $\F_9$ which lie above the tritangent line $2x+z=0$ in $\P^2$. The inclusion $\mathrm{NS}(S)\subset \mathrm{NS}(\overline{S})$ is given by $H=C_1+C_2$. We claim that there is no $v$-generic polarization on $S$ for $v=(2,-H,0)$. 

Let $\shE$ be a non-trivial extension of $\shO_S(-C_1)$ by $\shO_S(-C_2)$, which exists since $\mathrm{Ext}^1(\shO_S(-C_1),\shO_S(-C_2))=\F_3^3$. It can be checked that $v(\shE)=(2,-H,0)$, and that the reduced Hilbert polynomials with respect to $H$ satisfy $p_{\shO_S(-C_2)}(t)=p_{\shE}(t)=t^2-t+1$. Furthermore, every other saturated subsheaf of $\shE$ has strictly smaller reduced Hilbert polynomial, so $\shO_S(-C_2)$ is the only destabilizing subsheaf. Thus the locus of geometrically stable sheaves will be a proper subvariety of the moduli space of semistable sheaves on $S$ with $v=(2,-H,0)$.

This example further demonstrates that $M_H(v)$ need not exist when $H$ is not defined over the ground field. Consider $H'=H+\epsilon C_1$ for small $\epsilon>0$. We claim that $M_{H'}(v):=M_{H'}(S_{\F_9}, v)$ cannot be constructed over $\F_3$. If it could, then the $\mathrm{Gal}(\overline{\F}_3/\F_3)$-action would permute the points of $M_{H'}(v)_{\overline{\F}_3}$. However, the sheaf $\shE$ from above is in $M_{H'}(v)$, and its image under the action of the Frobenius is not, giving a contradiction. We see this by looking at the eigenvalues of the Frobenius action $\sigma$ given in the proof of \cite[Prop.\ 5.5]{HassVA}, which shows that $\sigma$ on $\overline{S}$ swaps $C_1$ and $C_2$, so applying $\sigma^*$ to the sequence
$$0\to \mathcal{O}_S(-C_2) \to \shE \to \mathcal{O}_{S}(-C_1) \to 0$$
swaps $\mathcal{O}_S(-C_2)$ and $\mathcal{O}_{S}(-C_1)$. This shows that $\mathcal{O}_{S}(-C_2)$ is a destabilizing quotient of $\sigma^*\shE$.
\end{example}

In light of the example above, we will restrict ourselves to situations in which properly semistable sheaves do not exist. This can be guaranteed, for example, if $r>0$ and the components of $v$ satisfy a gcd condition, as Charles assumes in \cite[Def.\ 2.3]{C}, or if $\text{rank}(\mathrm{NS}(S))=\text{rank}(\mathrm{NS}(\overline{S}))$, as Huybrechts assumes in \cite{HuyMotives}, and $r>0$ or $r=0$ and $s\neq 0$. For simplicity, we will always pick polarizations satisfying Definition \ref{generic}.

\begin{prop}
\label{moduli}
Let $v\in N(S)$ be an effective and geometrically primitive Mukai vector with $v^2\geq 0$, and let $H$ be a $v$-generic polarization on $S$. Then ${M}$ is a non-empty, smooth, projective, geometrically irreducible variety over $k$ of dimension $v^2+2$.
\end{prop}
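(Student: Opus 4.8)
The plan is to reduce each assertion to the corresponding statement over the algebraic closure $\overline{k}$, where all of them are classical. Write $\overline{v}$ for the image of $v$ under $N(S)\to N(\overline{S})$; since the Mukai pairing on $N(S)$ is the restriction of the one on $N(\overline{S})$, one has $\overline{v}^2=v^2\geq 0$, and $\overline{v}$ is primitive and effective. The first step is to identify the base change $\overline{M}:=M\times_k\overline{k}$ with the classical moduli space $M_{\overline{H}}(\overline{S},\overline{v})$ of $\overline{H}$-stable sheaves on $\overline{S}$. Langer's construction \cite{La} produces a moduli space of Gieseker $H$-semistable sheaves that is quasi-projective over $k$ and whose formation commutes with the base change $\overline{k}/k$, and $M$ is the open subscheme of geometrically stable sheaves inside it. Because $H$ is $v$-generic, over $\overline{k}$ every $\overline{H}$-semistable sheaf is $\overline{H}$-stable, so there are no strictly semistable sheaves; hence the semistable and geometrically stable loci coincide both over $\overline{k}$ and over $k$, giving $\overline{M}=M_{\overline{H}}(\overline{S},\overline{v})$.

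Over $\overline{k}$ I would then invoke the known structure theory. For an $\overline{H}$-stable sheaf $\shF$ on the K3 surface $\overline{S}$ one has $\End(\shF)=\overline{k}$, and Serre duality together with $\omega_{\overline{S}}\cong\shO_{\overline{S}}$ gives $\Ext^2(\shF,\shF)\cong\Hom(\shF,\shF)^{\vee}=\overline{k}$; thus the deformations of $\shF$ are unobstructed and $M_{\overline{H}}(\overline{S},\overline{v})$ is smooth at $[\shF]$ of dimension $\dim\Ext^1(\shF,\shF)=2-\chi(\shF,\shF)=2+\overline{v}^2$. Non-emptiness and irreducibility of $M_{\overline{H}}(\overline{S},\overline{v})$, for $\overline{v}$ primitive and effective with $\overline{v}^2\geq 0$ and $\overline{H}$ generic, follow from the theorems of O'Grady \cite{OG97} and Yoshioka \cite{Y}, with the positive-characteristic cases covered as in \cite{C}.

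It remains to descend along the faithfully flat base change $\overline{k}/k$. Smoothness and properness of a morphism are fpqc-local on the base, so $M\to\Spec k$ is smooth and proper because $\overline{M}\to\Spec\overline{k}$ is; in particular $M$ is geometrically reduced. Moreover $M$ is non-empty because $\overline{M}$ is, $\dim M=\dim\overline{M}=v^2+2$ since dimension is unchanged under the field extension, and $M$ is geometrically irreducible precisely because $\overline{M}$ is irreducible; being geometrically reduced and geometrically irreducible, $M$ is a variety. Finally, $M$ is quasi-projective over $k$ by \cite[Thm.\ 0.2]{La} and has just been shown proper over $k$, hence it is projective over $k$. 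This is exactly the assertion of the proposition.

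The one point requiring genuine argument rather than citation or formal descent is the first step: that the scheme $M$ coincides after base change with the classical moduli space, i.e.\ that Langer's semistable moduli space commutes with the extension $\overline{k}/k$ and that $v$-genericity really does rule out all strictly semistable sheaves over $\overline{k}$, so that no S-equivalence identifications occur and the geometrically stable locus is the whole semistable moduli space. The inputs over $\overline{k}$ — non-emptiness, irreducibility, and the smoothness and dimension count — are deep but standard, and the passage back down to $k$ is routine.
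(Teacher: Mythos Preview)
Your proposal is correct and takes essentially the same approach as the paper: reduce each assertion to $\overline{k}$, invoke the classical structure theory there, and descend. The only differences are in presentation --- the paper obtains projectivity directly from Langer's theorem over $k$ (since $v$-genericity already forces semistable $=$ geometrically stable over $k$) rather than via properness-plus-quasi-projectivity, and it spells out the positive-characteristic non-emptiness via an explicit lifting argument (a relative moduli space over a lift of $\overline{S}$ using \cite[Prop.~1.5]{C}) rather than treating it as a black box from \cite{C}, citing \cite{KLS} for irreducibility over any algebraically closed field.
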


This was proved in \cite[Thm.\ 2.4(i)]{C} under the stronger assumption that $v$ satisfy condition $(C)$ given in \cite[Def.\ 2.3]{C}, which in particular implies that ${M}$ is a fine moduli space. See also \cite[Prop.\ 4.5]{FuLi} for a similar result which is slightly more general than \cite[Thm.\ 2.4(i)]{C}, but which still requires $v$ to have positive rank.

\begin{proof}[Proof of Proposition \ref{moduli}]
First, we show that ${M}$ is projective. By \cite[Thm.\ 0.2]{La}, it is enough to show that any semistable sheaf in $M$ is actually geometrically stable. But this follows immediately since $v$ is geometrically primitive and $H$ is $v$-generic. 

For smoothness, we know $\overline{M}=M_{\overline{k}}$ is smooth by \cite[Cor.\ 0.2]{Mu}, and hence $M$ is also smooth. Once we know $M$ is non-empty, discussed below, \cite[Cor.\ 0.2]{Mu} also shows that $\dim M=v^2+2$.

We show next that ${M}$ is geometrically irreducible. Over $\C$, this fact is well-known: see \cite[Thm.\ 4.1]{KLS} or \cite[Thm.\ 8.1]{Y}. In fact, the proof of \cite[Thm.\ 4.1]{KLS} works over any algebraically closed field, and so $\overline{M}$ is irreducible when $M$ is defined over any field.

Lastly, the non-emptiness of $M$ over $k=\C$ is due to Mukai and Yoshioka and is summarized in \cite[Thm.\ 10.2.7]{HuyK3}. 
For any other $k$ with $\mathrm{char}\,k=0$, we apply the Lefschetz principle. 
For $\mathrm{char}\,k>0$, we will lift $\overline{S}$ to characteristic zero. By \cite[Prop.\ 1.5]{C}, there is a finite flat morphism $\Spec W' \to \Spec W$, where $W$ is the ring of Witt vectors of $\overline{k}$ and $W'$ is a discrete valuation ring with fraction field of characteristic zero and residue field $\overline{k}$, and a smooth projective relative K3 surface $\mathcal{S} \to \Spec W'$ with special fiber isomorphic to $\overline{S}$. Moreover, there are lifts $\mathcal{H}$ of $H$ and $\tilde{c}_1$ of $c_1$ to $\mathcal{S}$, so we can form the relative moduli space $f\colon \mathcal{M}_{\mathcal{H}}(\mathcal{S}, v_{W'})\to \Spec W'$ parameterizing geometrically stable sheaves on the fibers of $\mathcal{S}\to\Spec W'$, which is projective by \cite[Thm.\ 0.2]{La}. In particular, $f$ is proper and the generic fiber is defined over a field of characteristic zero. Thus, the generic fiber is non-empty by the argument above, so the special fiber $\overline{M}$ is non-empty, so $M$ is non-empty.
\end{proof}

\section{Generalizing results of Mukai and O'Grady}
\label{orthog}

In \cite{Mu2}, Mukai showed that for a complex projective K3 surface $S$ and a primitive Mukai vector $v$ with $v^2=0$, there is a Hodge isometry $v^\perp/\langle v\rangle \cong H^2_{sing}({M}, \Z)$, where $v^\perp$ is the orthogonal complement of $v$ in the Mukai lattice, $M=M(v)$, and the pairing on $H^2_{sing}({M}, \Z)$ is given by the Beauville-Bogomolov form. Similarly, when $v^2>0$, O'Grady \cite{OG97} proved that $v^\perp \cong H^2_{sing}({M}, \Z)$. In \cite[Def.\ 26.19]{GHJ}, Huybrechts shows that, up to scaling, we can define the Beaville-Bogomolov form on a hyperk\"ahler variety $X$ by
$$\tilde{q}_X(\alpha)=\int_X \alpha^2\sqrt{\text{td}\,X},$$
and now this definition makes sense for \'etale cohomology as well. 

We will prove here that the isometries proved by Mukai and O'Grady also hold in \'etale cohomology when $S$ is defined over an arbitrary field $k$ and $v$ is an effective and geometrically primitive Mukai vector.

\begin{prop}
\label{H^2iso}
Let $S$ be a K3 surface defined over an arbitrary field $k$ and $v\in N(S)$ an effective and geometrically primitive Mukai vector with a $v$-generic polarization $H\in \mathrm{NS}(S)$.
\begin{enumerate}[(i)]
\item When $v^2>0,$ there is a Galois equivariant isometry $$v^\perp \cong H^2(\overline{{M}}, \Z_\ell(1)).$$
\item When $v^2=0,$ there is a Galois equivariant isometry $$v^\perp/\langle v \rangle \cong H^2(\overline{{M}}, \Z_\ell(1)).$$
\end{enumerate}
\end{prop}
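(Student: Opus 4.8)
Over $\overline{k}$ I would construct the Mukai homomorphism attached to a quasi-universal sheaf, prove that its restriction to $v^\perp$ in degree two is independent of the chosen quasi-universal sheaf, deduce Galois equivariance from that independence, and identify the map with the isometry of Mukai and O'Grady by reduction to $\C$. Concretely: over $\overline{k}$ there is a quasi-universal sheaf $\mathcal{E}$ on $\overline{S}\times\overline{M}$, flat over $\overline{M}$, with $\mathcal{E}|_{\overline{S}\times[F]}\cong F^{\oplus\rho}$ for a fixed similitude $\rho$; such a family exists for moduli of geometrically stable sheaves over any field, but it need not descend to $k$ when $M$ is not fine. Writing $v(\mathcal{E})=\operatorname{ch}(\mathcal{E})\sqrt{\operatorname{td}(\overline{S}\times\overline{M})}$ for its Mukai vector and $(\blank)^\vee$ for the Mukai duality involution, set
$$f_{\mathcal{E}}\colon\widetilde{H}(\overline{S},\Z_\ell)\longrightarrow H^*(\overline{M},\Z_\ell),\qquad f_{\mathcal{E}}(\alpha)=\tfrac1\rho\,p_{M*}\big(p_S^*(\alpha^\vee)\cdot v(\mathcal{E})\big).$$
A degree count shows $f_{\mathcal{E}}$ carries the weight-zero summand $v^\perp$ into $H^2(\overline{M},\Z_\ell(1))$, and the degree-zero component of $f_{\mathcal{E}}(\alpha)$ is $\pm(\alpha,v)$ by the standard computation of the Mukai homomorphism, so it vanishes on $v^\perp$. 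That $H^2(\overline{M},\Z_\ell)$ is a free $\Z_\ell$-module (so that ``isometry'' makes sense) follows because $M$ is deformation equivalent to $S^{[n]}$ by \cite{C} and integral cohomology of the Hilbert scheme is torsion-free; integrality of the image is part of the theorems of Mukai \cite{Mu2} and O'Grady \cite{OG97}, transferred as below.

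\textbf{Canonicity and Galois equivariance.} Any two quasi-universal sheaves differ by tensoring with the pullback of a locally free sheaf $F$ from $\overline{M}$, and $v(\mathcal{E}\otimes p_M^*F)=v(\mathcal{E})\cdot p_M^*\operatorname{ch}(F)$; the projection formula then gives $f_{\mathcal{E}\otimes p_M^*F}(\alpha)=\operatorname{rk}(F)^{-1}\operatorname{ch}(F)\cdot f_{\mathcal{E}}(\alpha)$, so once $f_{\mathcal{E}}(\alpha)_0=0$, i.e.\ for $\alpha\in v^\perp$, the degree-two components of $f_{\mathcal{E}\otimes p_M^*F}(\alpha)$ and $f_{\mathcal{E}}(\alpha)$ coincide. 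Hence $f_{\mathcal{E}}|_{v^\perp}\colon v^\perp\to H^2(\overline{M},\Z_\ell(1))$ does not depend on $\mathcal{E}$. Since $v\in N(S)$ is Galois invariant, for $\sigma\in\mathrm{Gal}(\overline{k}/k)$ the pullback $\sigma^*\mathcal{E}$ is again quasi-universal with similitude $\rho$, and because every operation defining $f_{\mathcal{E}}$ (pullback, Mukai duality, cup product, $\operatorname{ch}$, the Todd class of $\overline{S}\times\overline{M}$, pushforward) is defined over $k$, one gets $\sigma^*(f_{\mathcal{E}}(\alpha))=f_{\sigma^*\mathcal{E}}(\sigma^*\alpha)$. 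Restricting to $v^\perp$ (which is $\sigma$-stable) and using the independence just proved yields $\sigma^*(f_{\mathcal{E}}(\alpha))=f_{\mathcal{E}}(\sigma^*\alpha)$, the desired equivariance.

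\textbf{Identification and transfer to characteristic $p$.} It remains to check that $f_{\mathcal{E}}|_{v^\perp}$ is an isometry onto $H^2(\overline{M},\Z_\ell(1))$ when $v^2>0$, and (then $v\in v^\perp$, and $v$ is primitive since geometrically primitive) has kernel exactly $\langle v\rangle$, inducing an isometry on $v^\perp/\langle v\rangle$, when $v^2=0$; the target carries the Beauville--Bogomolov form $\tilde q_M(\alpha)=\int_{\overline{M}}\alpha^2\sqrt{\operatorname{td}\overline{M}}$ recalled above. In characteristic zero this is O'Grady's theorem \cite{OG97} for $v^2>0$ and Mukai's \cite{Mu2} for $v^2=0$, brought over to an arbitrary field by the Lefschetz principle and Artin's comparison theorem exactly as in the non-emptiness argument of Proposition \ref{moduli}, the Mukai vector and the \'etale Beauville--Bogomolov form being preserved under the comparison isomorphisms. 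In characteristic $p$ one lifts $\overline{S}$, $H$, $v$, and the moduli space over a discrete valuation ring $W'$ with residue field $\overline{k}$ and characteristic-zero fraction field, as in the proof of Proposition \ref{moduli}; smooth and proper base change identifies $\widetilde{H}(\overline{S},\Z_\ell)$ and $H^*(\overline{M},\Z_\ell)$ with the corresponding cohomology of the geometric generic fibre, compatibly with Chern characters and with $\tilde q_M$, so the characteristic-zero statement applies. This is Charles's argument in \cite[Thm.\ 2.4]{C} for $v^2>0$, run without his hypothesis $(C)$, together with the parallel argument for $v^2=0$.

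\textbf{Main obstacle.} The genuinely new point beyond Charles's fine-moduli case is that the quasi-universal sheaf need not be defined over $k$, and the crux is the canonicity computation showing that $f_{\mathcal{E}}|_{v^\perp}$ in degree two is independent of $\mathcal{E}$ — this is precisely what makes Galois equivariance both well-posed and true. A secondary technical burden is bookkeeping: checking that the Mukai homomorphism, the integrality of its image, and its compatibility with the Mukai and Beauville--Bogomolov pairings are all preserved under the specialization isomorphisms used in the characteristic-$p$ reduction.
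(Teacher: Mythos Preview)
Your proposal is correct and follows the same global architecture as the paper (reduce to $\C$ via Lefschetz and comparison, then lift from characteristic $p$ as in Proposition~\ref{moduli}), but the Galois-equivariance step is genuinely different. You work with a quasi-universal sheaf only over $\overline{k}$ and prove that the degree-two component of the Mukai map on $v^\perp$ is independent of the choice of quasi-universal sheaf; equivariance then follows because $\sigma^*\mathcal{E}$ is again quasi-universal. The paper instead shows, using Langer's result that $\mathscr{R}^s\to M$ is a principal $\mathrm{PGL}(V)$-bundle in the fppf (hence \'etale) topology, that a quasi-universal sheaf $\mathcal{U}$ already exists on $S\times M$ over $k$, so $v(\mathcal{U})$ is Galois-invariant and equivariance is immediate. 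Your canonicity argument is the more classical route (essentially the one in Huybrechts--Lehn and Mukai) and has the virtue of being self-contained; the paper's approach is shorter once Langer is invoked and also supplies a relative quasi-universal sheaf on $\mathcal{S}\times_{W'}\mathcal{M}$, which makes the comparison of Mukai maps on the special and generic fibres explicit --- a point you handle only by citing Charles and asserting compatibility with Chern characters. One small imprecision: two quasi-universal sheaves need not literally differ by a twist $p_M^*F$; rather $\mathcal{E}_1\otimes p_M^*W_2\cong\mathcal{E}_2\otimes p_M^*W_1$ for suitable $W_i$, but your computation still yields the desired independence on $v^\perp$.
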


Charles in \cite[Thm.\ 2.4(v)]{C} proved this result when $v^2>0$ with stronger assumptions on $v$. We follow his technique to prove the more general result, making modifications where necessary. We will prove Proposition \ref{H^2iso} in great detail so that we can easily refer back to it in a similar situation later.

\begin{proof}[Proof of Proposition \ref{H^2iso}]
In order to define the Mukai map which will give the desired isomorphisms, we must first show that a quasi-universal sheaf $\mathcal{U}$ exists on $S\times {M}$, in the sense of \cite[Def.\ 4.6.1]{HuyLehn}. We claim that the proof of existence given in \cite[Prop.\ 4.6.2]{HuyLehn} holds over any field by appealing to work by Langer on moduli of sheaves in mixed characteristic. Langer proves in \cite[Thm.\ 4.3]{LaDuke} that the quotient $\mathscr{R}^s \to M$, where $\mathscr{R}^s$ is the subset of the Quot-scheme parameterizing stable sheaves, is a principal PGL$(V)$-bundle in the fppf topology and by \cite[I.3.26]{Mi}, it also has local sections in the \'etale topology. Then the proof of \cite[Prop.\ 4.6.2]{HuyLehn} gives that the universal sheaf on $S\times \mathscr{R}^s$ descends to a quasi-universal sheaf on $S\times {M}$.

Introducing some notation, we let $\pi_1$ and $\pi_2$ be the two projections from $S\times {M}$:
\[\xymatrix{  &  S\times {M} \ar[dl]_{\pi_1} \ar[dr]^{\pi_2}&  \\
			S & 			&  {M}. }
\]
The Mukai map $\theta_v\colon \widetilde{H}(\overline{S},\Z_\ell) \to H^2(\overline{{M}}, \Z_\ell(1))$ is defined by
$$\alpha \mapsto \frac{1}{\rho}\left[\pi_{2*}(v(\mathcal{U})\cdot \pi_1^*(\alpha))\right]_{2},$$
where $v(\mathcal{U})$ is the Mukai vector of $\mathcal{U}$ and $\rho$ is the similitude of $\mathcal{U}$ (that is, the rank of the sheaf $W$ in \cite[Def.\ 4.6.1]{HuyLehn}). 

We are now ready to prove that if $v^2>0$, then $\theta_v|_{v^\perp}$ gives $(i)$. This will be done in different cases depending on the field $k$. If $k=\C$, then $\theta_v$ was proved in \cite[Main Thm.]{OG97} to be an isometry for singular cohomology with coefficients in $\Z$. This isomorphism can be tensored with $\Z_\ell(1)$, and then the comparison theorem for singular and \'etale cohomology gives the isomorphism $v^\perp \cong H^2(M, \Z_\ell(1))$. 

Now suppose $k$ is an arbitrary field of characteristic zero. Again by the Lefschetz principle, there is a field $k'$ with inclusions $\overline{k'}\hookrightarrow \C$ and $\overline{k'}\hookrightarrow \overline{k}$ such that $S$ and $M$ are defined over $k'$. The inclusions give the following horizontal isomorphisms by smooth base change:
\[\xymatrix{ H^2(\overline{M}, \Z_\ell(1)) & \ar[l]_{\sim} H^2(M_{\overline{k'}}, \Z_\ell(1)) \ar[r]^{\sim} &  H^2(M_\C, \Z_\ell(1)) \\
\ar[u]^{\theta_v} v^\perp_{\overline{k}} & \ar[u] \ar[l]_{\sim} v^\perp_{\overline{k'}} \ar[r]^{\sim} & \ar[u]_{\theta_v}^-{\rotatebox{90}{$\sim$}}  v^\perp_\C \comma
}\] 
where $\displaystyle v^\perp_{\overline{k'}}\subseteq \widetilde{H}(S_{\overline{k'}}, \Z_\ell)$, and similarly for $v^\perp_{\overline{k}}$ and $v^\perp_\C$. The right-most vertical arrow is an isomorphism by the argument above, and by commutativity this implies the other vertical arrows are isomorphisms as well. 

Next, suppose $k$ is an arbitrary field of characteristic $p>0$. As in the proof of Proposition \ref{moduli}, we form the relative moduli space \linebreak$\mathcal{M}=\mathcal{M}_{\mathcal{H}}(\mathcal{S},v_{W'})$, a projective scheme over $\Spec W'$ whose central fiber is $\overline{M}$. We will use smooth base change below, so we first show that $f\colon \mathcal{M}\to \Spec W'$ is a smooth morphism. This will follow by showing that $f$ is smooth at closed points in the central fiber, which are the closed points of $\overline{M}$, so they correspond to geometrically stable sheaves $\shF$ on $\overline{S}$. By \cite[Lem.\ 3.1.5]{HLT}, $f$ is smooth at such a point $[\shF]$ if and only if Pic$(\mathcal{S}/W')$ is smooth at $[\det\shF]$. The latter is smooth because $\det \shF=c_1$ lifted from $\overline{S}$ to $\mathcal{S}$.

By the same argument given above, there is a quasi-universal sheaf $\,\widetilde{\mathcal{U}}$ on $\mathcal{S}\times_{W'} \mathcal{M}$. Continuing to use $\pi_1$ and $\pi_2$ for the projections to $\mathcal{S}$ and $\mathcal{M}$, we can define a relative Mukai map 
$$\widetilde{\theta}_v\colon v^\perp_{W'} \to H^2(\mathcal{M}, \Z_\ell(1)),$$ 
where $v^\perp_{W'}\subset \widetilde{H}(\mathcal{S}, \Z_\ell)$, and where $\widetilde{\theta}_v(\alpha)=\frac{1}{\rho}[\pi_{2*}(v(\widetilde{\mathcal{U}})\cdot \pi_1^*(\alpha))]_2$. We observe that $\widetilde{\theta}_v$ restricts exactly to the map $\theta_v$ over both fibers.

Next we conclude by  \cite[VI.4.2]{Mi} that the cohomology groups for all geometric fibers of $\mathcal{M}\to \Spec W'$ are isomorphic. In particular, if we let $K:=\text{Frac}\, W'$,  it follows that $H^2(M_{\overline{K}}, \Z_\ell(1))\cong H^2(\overline{M}, \Z_\ell(1))$. Similarly, the corresponding Mukai lattices are also isomorphic. Thus, the smooth base change theorem gives the following commutative diagram with horizontal isomorphisms, where the right-most vertical arrow is an isomorphism because $\mathrm{char}\,{K}=0$:
\[\xymatrix{ H^2(\overline{M},\Z_\ell(1)) & H^2(\mathcal{M},\Z_\ell(1)) \ar[l]_-{\sim} \ar[r]^-{\sim} & H^2(M_{\overline{K}},\Z_\ell(1)) \\
v^\perp_{\overline{k}} \ar[u]_-{\theta_v} & v^\perp_{W'} \ar[l]_-{\sim} \ar[r]^-{\sim} \ar[u]_-{\widetilde{\theta}_v} & v^\perp_{\overline{K}} \ar[u]_-{\theta_v}^-{\rotatebox{90}{$\sim$}} \period}\]
Therefore, the left-most vertical arrow is also an isomorphism, as desired. 

In all cases, $v(\mathcal{U})$ is defined over $k$, so $\theta_v$ is Galois equivariant, and it continues to respect the Mukai and Beauville-Bogomolov pairings as shown by \cite[Main Thm.]{OG97}. Hence $\theta_v$ is a Galois equivariant isometry. This completes the proof of $(i)$.

The proof of $(ii)$ follows the same argument, using the isometry\linebreak$v^\perp/\langle v \rangle \xrightarrow{\sim} H^2(M, \Z)$ proved in \cite[Thm.\ 1.4]{Mu2} for $k=\C$ in place of \cite[Main Thm.]{OG97}. 
\end{proof}

\section{A Galois equivariant isometry}
\label{i=1}

To prove Theorem \ref{mainthm} for $i=2$, it remains to show the following:

\begin{prop}
\label{perps}
Let $v\in N(S)$ be a non-zero Mukai vector on a K3 surface $S$ defined over an arbitrary field $k$, and consider $v^\perp\subset \widetilde{H}(\overline{S}, \Q_\ell)$. 
\begin{enumerate}[(i)]
\item When $v^2>0$, there is a Galois equivariant isometry $$v^\perp \cong H^2(\overline{S}, \Q_\ell(1))\oplus \Q_\ell,$$ where the pairing on the right side is given by the intersection form on $H^2(\overline{S}, \Q_\ell(1))$ and $-v^2$ on the generator of $\Q_\ell$.
\item When $v^2=0$, there is a Galois equivariant isometry $$v^\perp/\langle v \rangle \cong H^2(\overline{S}, \Q_\ell(1)),$$ where the pairing on the right side is given by the intersection form.
\end{enumerate}
\end{prop}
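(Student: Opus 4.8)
The plan is to reduce everything to explicit computations in the Mukai lattice $\widetilde{H}:=\widetilde{H}(\overline{S},\Q_\ell)=H^0(\overline{S},\Q_\ell)\oplus H^2(\overline{S},\Q_\ell(1))\oplus H^4(\overline{S},\Q_\ell(2))$. Set $U:=H^0(\overline{S},\Q_\ell)\oplus H^4(\overline{S},\Q_\ell(2))$, spanned by $(1,0,0)$ and $\omega=(0,0,1)$; it is a hyperbolic plane carrying the \emph{trivial} Galois action, orthogonal to $H^2(\overline{S},\Q_\ell(1))$, on which the Mukai pairing restricts to the intersection form. Writing $v=(r,c_1,s)$ with $r,s\in\Z$ and $c_1\in\mathrm{NS}(S)$, all three components are Galois-invariant and $v^2=c_1^2-2rs$.

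I would first record two Galois-equivariant isometries of $\widetilde{H}$. The involution $(a,\beta,b)\mapsto(b,\beta,a)$ is an isometry, and it is Galois-equivariant since it only interchanges the two trivial summands $H^0$ and $H^4$. For $\delta\in\mathrm{NS}(S)\otimes\Q\subseteq H^2(\overline{S},\Q_\ell(1))$, the $B$-field map $\exp(\delta)\colon(a,\beta,b)\mapsto(a,\ \beta+a\delta,\ b+\beta\cdot\delta+\tfrac{a}{2}\delta^2)$ preserves the Mukai pairing (direct computation) and is Galois-equivariant because $\delta$ is; moreover, when $r\neq0$, $\exp(-c_1/r)$ carries $v$ to $(r,0,-\tfrac{v^2}{2r})$. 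Since any such isometry carries $v^\perp$ and $\langle v\rangle$ to the corresponding subspaces for the image of $v$, it is enough to treat two cases: $v=(r,0,s')$ with $r\neq0$ and $-2rs'=v^2$; and $v=(0,c_1,0)$ with $c_1\neq0$.

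In the first case $v^\perp=\{(a,\beta,b):rb+s'a=0\}$. If $v^2>0$, this is the orthogonal direct sum $\Q_\ell\cdot g\,\oplus\,H^2(\overline{S},\Q_\ell(1))$, where $g:=(r,0,-s')$ is Galois-invariant, $g\perp H^2(\overline{S},\Q_\ell(1))$, and $(g,g)=2rs'=-v^2$: this is (i). If $v^2=0$ then $s'=0$, so $v=(r,0,0)$, $v^\perp=H^0\oplus H^2(\overline{S},\Q_\ell(1))$, $\langle v\rangle=H^0$, and the induced pairing on $v^\perp/\langle v\rangle\cong H^2(\overline{S},\Q_\ell(1))$ is the intersection form: this is (ii). In the second case, $v^\perp=U\perp c_1^\perp$ with $c_1^\perp\subseteq H^2(\overline{S},\Q_\ell(1))$, and $\langle v\rangle=\langle c_1\rangle$. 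When $v^2=c_1^2>0$, the class $c_1$ is anisotropic, so $H^2(\overline{S},\Q_\ell(1))=\Q_\ell c_1\perp c_1^\perp$ is a Galois-equivariant orthogonal decomposition; since $U$ and $\langle v^2\rangle\oplus\langle-v^2\rangle$ are both hyperbolic planes over $\Q_\ell$ with trivial action, $v^\perp\cong\langle v^2\rangle\perp c_1^\perp\perp\langle-v^2\rangle=H^2(\overline{S},\Q_\ell(1))\oplus\langle-v^2\rangle$: this is (i).

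The one case needing an idea is $v=(0,c_1,0)$ with $c_1^2=0$, i.e.\ assertion (ii) for such $v$. One would like to split off a hyperbolic plane containing $c_1$ from $H^2(\overline{S},\Q_\ell(1))$, but there need be no Galois-equivariant complement to $\langle c_1\rangle$ — the Galois action on $H^2$ need not be semisimple — so ``equivariant Witt cancellation'' is unavailable. The remedy is to stay inside $\mathrm{NS}(S)$: because the intersection form on $\mathrm{NS}(S)$ is non-degenerate and $c_1\neq0$, there is $\gamma\in\mathrm{NS}(S)\otimes\Q$, automatically Galois-invariant, with $c_1\cdot\gamma=1$. Then $P:=\langle c_1,\gamma\rangle$ is a trivial-action hyperbolic plane, $H^2(\overline{S},\Q_\ell(1))=P\perp W$ with $W:=c_1^\perp\cap\gamma^\perp$ Galois-invariant, and $c_1^\perp=\langle c_1\rangle\perp W$ with $\langle c_1\rangle$ totally isotropic; hence $v^\perp/\langle v\rangle=(U\perp\langle c_1\rangle\perp W)/\langle c_1\rangle\cong U\perp W\cong P\perp W=H^2(\overline{S},\Q_\ell(1))$, the isometry matching intersection forms. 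I expect this last case — recognizing that the auxiliary class must, and can, be chosen in $\mathrm{NS}(S)$ — to be the only genuine subtlety; the rest is bookkeeping with hyperbolic planes over $\Q_\ell$ and the two isometries above.
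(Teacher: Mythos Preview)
Your argument is correct, but it follows a genuinely different route from the paper's.

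The paper fixes a target vector $w=(1,0,1-n)$ (respectively $w=(1,0,0)$ for (ii)) and exhibits a single Galois-equivariant isometry of the \emph{entire} Mukai lattice carrying $v$ to $\pm w$: the reflection in $v\mp w$, whichever is anisotropic. When both $v\pm w$ are isotropic it composes two reflections, and in the residual case $v=(0,c_1,0)$ with $c_1^2=0$ it first twists by $e^h$ for an ample $h$ (using Hodge index to ensure $c_1\cdot h\neq 0$) before reflecting. You instead normalize $v$ with the swap involution and $B$-field shifts $\exp(\delta)$, then compute $v^\perp$ directly in each normal form, invoking only the uniqueness of the hyperbolic plane over $\Q_\ell$. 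Both arguments rest on the same principle---every auxiliary class is chosen in $\mathrm{NS}(S)\otimes\Q$, hence is Galois-invariant---and your choice of $\gamma\in\mathrm{NS}(S)_\Q$ with $c_1\cdot\gamma=1$ in the isotropic case is exactly the paper's ample $h$ in disguise. Two small remarks: you are tacit about the case $r=0$, $s\neq 0$, but it does follow from first applying your swap involution; and in the case $v=(0,c_1,0)$ with $c_1^2>0$ your identification $U\cong\langle v^2\rangle\perp\langle -v^2\rangle$ matches $v^\perp$ with $w^\perp$ without producing an isometry of the full lattice sending $v$ to $w$. That is irrelevant for this proposition, but the paper later reuses precisely such a global isometry $g$ (its reflection) to construct the correspondence $\gamma_g$ in the proof of the main theorem, so the reflection approach buys something downstream.
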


Note that Proposition \ref{perps} need not hold when $\Q_\ell$ is replaced with $\Z_\ell$, as demonstrated by the following example. 

\begin{example}
We consider the K3 surface $S$ defined over $\F_2$ in \cite[Ex.\ 6.1]{HVV}, which by the proof of \cite[Prop.\ 6.3]{HVV} has rank$(\mathrm{NS}(\overline{S}))= 2$. In particular, Hassett, V\'arilly-Alvarado, and Varilly find two independent classes in $\mathrm{NS}(S)$ on which the intersection pairing is
\[\left(\begin{array}{rr}-2 & 3 \\ 3 & -2 \end{array}\right).\]
Since this has a square-free discriminant of $-5$, the span of the two classes is a primitive sublattice in $\mathrm{NS}(S)$, and hence the classes span $\mathrm{NS}(S)$.

In this case, we can consider the effective and geometrically primitive Mukai vector $v=(5,2,3,0)\in N(S)$. Since $\text{rank}(\mathrm{NS}(S))=\text{rank}(\mathrm{NS}(\overline{S}))$, there is a $v$-generic polarization, and hence $M(v)$ is a 12-dimensional smooth projective variety. There is no $u\in N(S)$ such that $( u, v)=1$, so we do not expect $M(v)$ to be a fine moduli space. If there is an isometry $v^\perp\cong H^2(\overline{S},\Z_\ell(1))\oplus \Z_\ell$, then we can restrict it to the subspace of Galois invariants. The proof of \cite[Prop.\ 6.3]{HVV} also shows that the only invariant classes in $H^2(\overline{S},\Z_\ell(1))$ are those in $\mathrm{NS}({S})$, and so the sublattice $H^2(\overline{S},\Z_\ell(1))^{\mathrm{Gal(\overline{\F}_2/\F_2)}}\oplus \Z_\ell$ has discriminant 50. 
It can be checked that the pairing on $(v^\perp)^{\mathrm{Gal(\overline{\F}_2/\F_2)}}$ is
\[\left(\begin{array}{rrr}  -2 & 3 & -1 \\ 3 & -2 & 0 \\ -1 & 0 & 0 \end{array}\right),\]
which has discriminant 2. For these lattices to be isomorphic, the discriminants must differ by the square of a unit, but when $\ell=5$, this is not the case. So $v^\perp \not\cong H^2(\overline{S},\Z_5(1))\oplus\Z_5$ as sublattices of $\widetilde{H}(\overline{S},\Z_5)$. By Proposition \ref{perps}, it is only after tensoring with $\Q_5$ that these lattices become isomorphic.

This difference in coefficients is related to whether $M(v)$ is birational to the Hilbert scheme. If $w=(1,0,0,-5)$ in $N(S)$, then $M(w)=S^{[6]}$ and it is clear that $w^\perp=H^2(\overline{S}, \Z_\ell(1))\oplus \Z_\ell\langle (1,0,0,5)\rangle$. For hyperk\"ahler varieties defined over the complex numbers, the Beauville-Bogomolov form and hence the resulting discriminant group is a birational invariant. While this result has not been proved over arbitrary fields, our calculations suggest that we have found two moduli spaces that are not birational.
\end{example}

\begin{proof}[Proof of Proposition \ref{perps}]
To prove $(i)$, let $w:=(1,0,1-n)\in N(S)$ where $n=\frac{1}{2}(v^2+2)$. If $n>1$ then $w^\perp=H^2(\overline{S}, \Q_\ell(1))\oplus \Q_\ell\langle (1,0,n-1)\rangle$, so we will use reflections to prove that $v^\perp\cong w^\perp$. Observe that either $(v-w)^2\neq 0$ or $(v+w)^2\neq 0$. Then reflection through $v-w$ or $v+w$, respectively, gives a map $\widetilde{H}(\overline{S}, \Q_\ell) \to \widetilde{H}(\overline{S}, \Q_\ell)$. It can be checked that this reflection preserves the Mukai pairing, sends $v$ to $\pm w$, and induces a map $v^\perp \xrightarrow{\sim} w^\perp$ which is Galois equivariant. 
This completes the proof of $(i)$.

The proof of $(ii)$ requires a few modifications to the argument above. We now consider $w=(1,0,0)\in N(S)$, so that $w^\perp/\langle w\rangle =H^2(\overline{S}, \Q_\ell(1))$. If $(v-w)^2\neq 0$, then as above, reflection through $v-w$ restricts to a $\mathrm{Gal}(\overline{k}/k)$-equivariant isometry $v^\perp/\langle v \rangle \xrightarrow{\sim} w^\perp/\langle w \rangle$.

If instead $(v-w)^2=0$, then $(v,w)=0$ and $(v+w)^2=0$ as well. Let us write $v=(r,c_1,0)$. If $r\neq 0$, then reflecting through $v-(0,0,1)$ gives that $v^\perp/\langle v \rangle\cong (0,0,1)^\perp/\langle (0,0,1)\rangle$. Then $(0,0,1)^\perp/\langle (0,0,1)\rangle\cong w^\perp/\langle w\rangle$ by reflecting through $(0,0,1)-w$.

Lastly, suppose $v=(0,c_1,0)$. Since $c_1^2=0$ and $c_1 \neq 0$, by the Hodge Index Theorem, any ample divisor $h$ on $S$ is such that $c_1.h \neq 0$. If we let $v'=ve^h=(0,c_1,c_1.h)$, then it follows that $(v'- w)^2=2c_1.h\neq 0$, and so reflection through $v'-w$ is a Galois equivariant isometry $v'^\perp/\langle v' \rangle \cong w^\perp/\langle w\rangle$. It can be checked that $v^\perp/\langle v\rangle \xrightarrow{\cdot e^h} v'^\perp /\langle v' \rangle$ is an isometry, and it is Galois equivariant because $h$ is Galois invariant. This completes the proof of $(ii)$.
\end{proof}

\begin{remark}
\label{dim2}
Recall that $H^{i}(\overline{M}, \Q_{\ell})=0$ for odd $i$ \cite[Thm.\ 1(3)]{Ma3}, so the proof of Theorem \ref{mainthm} is complete in the case where $\dim M_1=\dim M_2=2$.
\end{remark}

\section{Reduction to the case of a single surface}
\label{single}

In Section \ref{i=1}, we were able to conclude Theorem \ref{mainthm} holds for $i=2$ by using results about a single K3 surface along with the assumption that $H^2(\overline{S}_1, \Q_\ell)\cong H^2(\overline{S}_2,\Q_\ell)$ as $\mathrm{Gal}(\overline{k}/k)$-representations. By the following proposition, to complete the proof for $i>2$ it is enough to show that $H^i(\overline{M}, \Q_\ell)\cong H^i(\overline{S}^{[n]}, \Q_\ell)$ as $\mathrm{Gal}(\overline{k}/k)$-representations, where $n=\frac{1}{2}\dim M$. 

\begin{prop}
Let $S_1$ and $S_2$ be two K3 surfaces defined over an arbitrary field $k$ such that $H^2(\overline{S}_1, \Q_\ell)\cong H^2(\overline{S}_2,\Q_\ell)$ as $\mathrm{Gal}(\overline{k}/k)$-representations. Then $H^i(\overline{S}_1^{[n]}, \Q_\ell)\cong H^i(\overline{S}_2^{[n]}, \Q_\ell)$ as $\mathrm{Gal}(\overline{k}/k)$-representations for all $i\geq 0$.
\end{prop}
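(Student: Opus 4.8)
The plan is to express $H^i(\overline{S}^{[n]},\Q_\ell)$ functorially in terms of $H^*(\overline{S},\Q_\ell)$, using the description of the cohomology of the Hilbert scheme of points on a surface due to G\"ottsche and G\"ottsche--Soergel, and then to observe that the inputs coming from $S_1$ and $S_2$ agree.

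The key ingredient is the following decomposition, for any smooth projective surface $X$ over $k$. Write a partition of $n$ as $\alpha=(1^{a_1}2^{a_2}\cdots)$, let $\nu(\alpha):=\sum_i a_i$ be its number of parts, and set $X^{(\alpha)}:=\prod_i X^{(a_i)}$. Then for every $i\ge 0$ there is an isomorphism of $\mathrm{Gal}(\overline{k}/k)$-representations
\[
H^i\bigl(\overline{X}^{[n]},\Q_\ell\bigr)\;\cong\;\bigoplus_{\alpha\,\vdash\,n}
H^{\,i-2(n-\nu(\alpha))}\bigl(\overline{X}^{(\alpha)},\Q_\ell\bigr)\bigl(\nu(\alpha)-n\bigr).
\]
Over $\C$, with Hodge structures in place of Galois representations, this is the theorem of G\"ottsche--Soergel (see also de Cataldo--Migliorini), obtained by applying the decomposition theorem to the semismall Hilbert--Chow morphism $\pi\colon X^{[n]}\to X^{(n)}$, whose strata are the loci of $0$-cycles of each multiplicity type. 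Over an arbitrary field I would deduce the $\ell$-adic, Galois-equivariant version from the same construction: $\pi$ is defined over $k$, its strata are $k$-rational, and — because $X$ is a smooth surface — the intersection complexes appearing in the splitting of $R\pi_{*}\Q_\ell$ are constant sheaves on stratum closures, so the splitting may be taken $\mathrm{Gal}(\overline{k}/k)$-equivariantly; taking hypercohomology yields the displayed formula. (Alternatively, one can invoke the decomposition $\mathfrak h(X^{[n]})\cong\bigoplus_{\alpha\,\vdash\,n}\mathfrak h(X^{(\alpha)})(\nu(\alpha)-n)$ of Chow motives, due to de Cataldo--Migliorini, whose defining correspondences are algebraic cycles over $k$, and apply the $\ell$-adic realization; recall that $X^{[n]}$ is smooth, projective, of dimension $2n$ over $k$ in every characteristic, by Fogarty.) Finally, since $X^{(a)}=X^{a}/\Sigma_a$ and we use $\Q_\ell$-coefficients, $H^*(\overline{X}^{(a)},\Q_\ell)\cong\Sym^a H^*(\overline{X},\Q_\ell)$ as $\mathrm{Gal}(\overline{k}/k)$-representations, so the right-hand side above is a direct sum of Tate twists of tensor products $\bigotimes_i\Sym^{a_i}H^*(\overline{X},\Q_\ell)$, assembled by a recipe that depends only on $n$ and $i$.

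It then remains to apply the displayed isomorphism with $X=S_1$ and $X=S_2$. By hypothesis $H^2(\overline{S}_1,\Q_\ell)\cong H^2(\overline{S}_2,\Q_\ell)$ as $\mathrm{Gal}(\overline{k}/k)$-representations; moreover $H^0(\overline{S}_j,\Q_\ell)\cong\Q_\ell$ and $H^4(\overline{S}_j,\Q_\ell)\cong\Q_\ell(-2)$ canonically, both surfaces being smooth, projective and geometrically connected, and $H^1(\overline{S}_j,\Q_\ell)=H^3(\overline{S}_j,\Q_\ell)=0$ since the $S_j$ are K3 surfaces. Thus $H^*(\overline{S}_1,\Q_\ell)\cong H^*(\overline{S}_2,\Q_\ell)$ as graded $\mathrm{Gal}(\overline{k}/k)$-representations. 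As the right-hand side of the displayed formula is built from $H^*(\overline{X},\Q_\ell)$ by functorial operations — direct sums, tensor products, symmetric powers (well-defined on $\Q_\ell$-representations, as we are in characteristic $0$), and Tate twists — isomorphic inputs yield isomorphic outputs, and we conclude $H^i(\overline{S}_1^{[n]},\Q_\ell)\cong H^i(\overline{S}_2^{[n]},\Q_\ell)$ as $\mathrm{Gal}(\overline{k}/k)$-representations for every $i\ge 0$.

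I expect the main obstacle to be precisely the first step: checking that the G\"ottsche--Soergel decomposition is available $\ell$-adically and $\mathrm{Gal}(\overline{k}/k)$-equivariantly over an arbitrary field, in particular in positive characteristic. One either runs the decomposition theorem for the semismall morphism $\pi$ directly with $\Q_\ell$-coefficients — exploiting that the summands are intersection complexes of constant sheaves along explicit $k$-rational strata, hence respected by the Galois action — or imports the motivic statement and realizes it $\ell$-adically. Everything else is formal: the bookkeeping of Tate twists in the displayed formula is routine, and the final reduction uses nothing about K3 surfaces beyond the vanishing of odd cohomology and the standard form of $H^0$ and $H^4$.
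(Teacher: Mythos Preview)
Your proposal is correct and follows essentially the same route as the paper: both invoke the de~Cataldo--Migliorini decomposition of the Chow motive of $\overline{S}^{[n]}$ into Tate-twisted motives of symmetric products (the paper cites \cite[Thm.~6.2.1]{dCM} directly, while you phrase it via the G\"ottsche--Soergel formula and then mention the motivic version as an alternative), note that the correspondences are defined over $k$ so the resulting cohomological decomposition is Galois-equivariant, identify $H^*(\overline{S}^{(m)},\Q_\ell)$ with the $\Sigma_m$-invariants in $H^*(\overline{S}^m,\Q_\ell)$, and conclude from $H^*(\overline{S}_1,\Q_\ell)\cong H^*(\overline{S}_2,\Q_\ell)$ as graded Galois representations. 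Your discussion of the possible obstacle in positive characteristic is also handled by the paper in the same way, by appealing to the fact that the tautological correspondences are defined over the base field.
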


\begin{proof}
For a K3 surface $S$, de Cataldo and Migliorini show in \cite[Thm.\ 6.2.1]{dCM} that the rational Chow motive of $\overline{S}^{[n]}$ is built out of motives of symmetric products $\overline{S}^{(l(\nu))}$ where $\nu$ is a partition of $n$ and $l(\nu)$ is the length of $\nu$. The maps $\overline{S}^{(l(\nu))}\to \overline{S}^{[n]}$ used to give the isomorphism are induced by tautological correspondences defined over the base field, so the decomposition works over any field (see \cite[Rmk.\ 6.2.2]{dCM}). This implies the following $\mathrm{Gal}(\overline{k}/k)$-equivariant isomorphism on the level of cohomology:
$$H^*(\overline{S}^{[n]}, \Q_\ell)\cong \bigoplus_{\nu\in \mathfrak{P}(n)} H^*(\overline{S}^{(l(\nu))}, \Q_\ell)(n-l(\nu)),$$
where $\mathfrak{P}(n)$ is the set of partitions of $n$. Since $H^*(\overline{S}^{(m)}, \Q_\ell)\cong H^*(\overline{S}^m, \Q_\ell)^{\Sigma_m}$ for any $m\geq 1$, where $H^*(\overline{S}^m, \Q_\ell)^{\Sigma_m}$ is the subring of $\Sigma_m$-invariants, the result follows.
\end{proof}

Thus the proof of Theorem \ref{mainthm} will be complete once we know that $H^i(\overline{M}, \Q_\ell)\cong H^i(\overline{S}^{[n]}, \Q_\ell)$ for a given K3 surface.

\begin{remark}
\label{notringiso}
It is interesting to observe that we need not arrive at a ring isomorphism between $H^*(\overline{S}_1^{[n]}, \Q_\ell)$ and $H^*(\overline{S}_2^{[n]}, \Q_\ell)$, and in fact this appears to depend on whether or not the isomorphism $H^2(\overline{S}_1,\Q_\ell)\cong H^2(\overline{S}_2,\Q_\ell)$ as Galois representations agrees with the cohomology ring structures. Indeed, if there is a Galois equivariant ring isomorphism $H^*(\overline{S}_1, \Q_\ell)\cong H^*(\overline{S}_2, \Q_\ell)$, then the intersection forms on the middle cohomology agree and along with Proposition \ref{perps} we get an isometry between their Mukai lattices. Following an argument akin to that given in Proposition \ref{cohomiso} below, this implies the rings $H^*(\overline{S}_1^{[n]}, \Q_\ell)$ and $H^*(\overline{S}_2^{[n]}, \Q_\ell)$ are isomorphic.

If instead the given isomorphism $H^2(\overline{S}_1,\Q_\ell)\cong H^2(\overline{S}_2,\Q_\ell)$ as Galois representations is not an isometry with respect to the intersection pairing, we should not expect $H^*(\overline{S}_1^{[n]}, \Q_\ell)$ and $H^*(\overline{S}_2^{[n]}, \Q_\ell)$ to be isomorphic rings. Suppose there is a ring isomorphism $\psi\colon H^*(\overline{S}_1^{[n]}, \Q_\ell) \xrightarrow{\sim} H^*(\overline{S}_2^{[n]}, \Q_\ell)$, and let $q_i\colon H^2(\overline{S}_i^{[n]}, \Q_\ell)\to \Q_\ell$ for $i=1$ and 2 be the Beauville-Bogomolov form, introduced at the beginning of Section \ref{orthog}. Then for $\alpha \in H^2(\overline{S}_1^{[n]}, \Q_\ell)$,
$$q_1(\alpha)^n=q_2(\psi(\alpha))^n,$$
so that $q_1$ and $q_2$ agree up to an $n^{th}$-root of unity. The only roots of unity in $\Q_\ell$ are the $(\ell-1)$th roots of unity for $\ell$ odd and $\pm 1$ for $\ell=2$, so if we choose $\ell>2$ with $\gcd(n,\ell-1)=1$, this root must be trivial. If $n$ is even, we can only ensure that $\gcd(n,\ell-1)=2$, implying the root is $\pm 1$, but we claim $q_1\not\cong -q_2$.

Consider when $n$ is even and $\ell=3$ so that $\gcd(n,2)=2$. We will show that for the form $q$ on $\Q_3^{23}$ giving $H^2(\overline{S}_1^{[n]}, \Q_3)$, there is no linear isomorphism of $\Q_3^{23}$ taking $q$ to $-q$, and hence the Beauville-Bogomolov forms on $H^2(\overline{S}_1^{[n]}, \Q_3)$ and $H^2(\overline{S}_2^{[n]}, \Q_3)$ cannot differ by a sign. By Propositions \ref{H^2iso} and \ref{perps}, $q$ is given by $(-E_8)^{\oplus 2}\oplus U^{\oplus 3}\oplus \langle 2-2n\rangle$. In the Witt group $W(\Q_3)$, it can be checked that $(-E_8)^{\oplus 2}\oplus U^{\oplus 3}=(E_8)^{\oplus 2}\oplus (-U)^{\oplus 3}=0$, so to see that $q\neq -q\in W(\Q_3)$, we must only check that $\langle 2-2n\rangle \neq \langle -(2-2n)\rangle$ as forms on $\Q_3$. The form $\langle 2-2n\rangle$ is equivalent to $\langle m\rangle$ for $m\in \{-3,-1,1,3\}$, from which it follows that $\langle m \rangle \neq \langle -m \rangle\in W(\Q_3)$ (see \cite[Cor.\ VI.1.6 and Thm.\ VI.2.2]{Lam}).
We conclude that $q_1$ and $q_2$ must agree.

Therefore, again by Propositions \ref{H^2iso} and \ref{perps}, there is a Galois equivariant isometry $H^2(\overline{S}_1, \Q_\ell)\oplus\Q_\ell\cong H^2(\overline{S}_2, \Q_\ell)\oplus\Q_\ell$. As in the proof of Proposition \ref{perps}$(i)$, the reflection that takes the generator of the first $\Q_\ell$ to the generator of the second $\Q_\ell$ restricts to a Galois equivariant isometry\linebreak$H^2(\overline{S}_1, \Q_\ell)\cong H^2(\overline{S}_2, \Q_\ell)$, hence determining the ring structure.
\end{remark}

\section{An isomorphism of the cohomology rings}
\label{theiso}

Let us consider a fixed K3 surface $S$ and a moduli space $M$ of stable sheaves on $S$ with an effective and geometrically primitive Mukai vector $v$. If $v^2=0$, Theorem \ref{mainthm} was proved in Section \ref{i=1} (see Remark \ref{dim2}). Assume now that $v^2>0$. We will continue to use the notation introduced in the proof of Proposition \ref{perps} that $w=(1,0,1-n)\in N(S)$ where $n=\frac{1}{2}(v^2+2)$. We follow \cite[Sec.\ 3.4]{Ma2} to produce an isomorphism between the cohomology rings of $\overline{M}$ and $\overline{S}^{[n]}$ by constructing a class in the middle cohomology of $\overline{M}\times \overline{S}^{[n]}$. This class will depend on the choice of an isometry $g\colon \widetilde{H}(\overline{S}, \Q_\ell) \xrightarrow{\sim} \widetilde{H}(\overline{S}, \Q_\ell)$ such that $g(v)=w$, and we will specifically use the reflection constructed in Proposition \ref{perps}.  

We outline here what Markman does to produce the desired ring isomorphism, where he starts with a complex projective K3 surface and an isometry on $H^*_{sing}(S,\Z)$. By considering an integral isometry, the cohomology class produced is an element of $H^{4n}_{sing}(M\times S^{[n]},\Z)$, and then Markman shows that this class induces a ring isomorphism
$$H^*_{sing}(M,\Z)\xrightarrow{\sim} H^*_{sing}(S^{[n]},\Z).$$
Since we will start with an isometry over $\Q_\ell$, the resulting class, and hence the map on cohomology, will also be defined over $\Q_\ell$. 
We will make a density argument to reduce to Markman's result and conclude that this map on $\Q_\ell$-cohomology is also an isomorphism.

In order to produce a map $H^*(\overline{M},\Q_\ell)\to H^*(\overline{S}^{[n]},\Q_\ell)$, we would like to compose cohomological Fourier-Mukai transforms with the isometry $g$. First, we have the map $H^*(\overline{M},\Q_\ell)\to H^*(\overline{S}, \Q_\ell)$ induced by the class $u_v$ in the cohomology of $\overline{S}\times_{\overline{k}}\overline{M}$, where $u_v$ is the pullback from $S\times_k {M}$ to $\overline{S}\times_{\overline{k}} \overline{{M}}$ of a normalization of $v(\mathcal{U})(\mathrm{td}\,M)^{-1/2}$, defined in \cite[Eq.\ (3.4)]{Ma2}. This is followed by $g\colon \widetilde{H}(\overline{S}, \Q_\ell)\to \widetilde{H}(\overline{S}, \Q_\ell)$, and the last map $H^*(\overline{S}, \Q_\ell)\to H^*(\overline{S}^{[n]},\Q_\ell)$ is induced by the class $u_w$, defined analogously to $u_v$. This composition clearly will not give an isomorphism, but the morphism Markman constructs is induced by a class built out of $u_v$, $g$, and $u_w$, as described below.

For a projective variety $X$, consider the universal polynomial map 
$$l\colon \oplus_i H^{2i}(\overline{X}, \Q_\ell) \to \oplus_i H^{2i}(\overline{X}, \Q_\ell)$$
taking the Chern character of a sheaf to its total Chern class. That is, 
\[l(r + a_1 +a_2 + \cdots)= 1 + a_1 + \left(\frac{1}{2}a_1^2-a_2\right)+\cdots .\]
Let $\pi_{ij}$ be the projection from $\overline{M}\times \overline{S}\times \overline{S}^{[n]}$ onto the product of the $i^{th}$ and $j^{th}$ factors. We define
\begin{equation*}
\label{isomsm}
\gamma_g := c_{2n}\big(l(-\pi_{13*}[\pi_{12}^*((1\otimes g)(u_v))^\vee \pi_{23}^*(u_w)] )\big),
\end{equation*}
so that $\gamma_g \in H^{4n}(\overline{M}\times \overline{S}^{[n]}, \Q_\ell(2n))$, the middle cohomology group. For further discussion on this choice of cohomology class, see \cite[Sec.\ 3.4]{Ma2}.

Now consider the projections from $\overline{M}\times \overline{S}^{[n]}$:
\[\xymatrix{  &  \overline{M}\times \overline{S}^{[n]} \ar[dl]_{q} \ar[dr]^{p}&  \\
			\overline{M} & 			&  \overline{S}^{[n]}. }
\]
We also let $\gamma_g$ denote the induced map $H^*(\overline{M}, \Q_\ell)\to H^*(\overline{S}^{[n]}, \Q_\ell)$ given by 
$$\alpha \mapsto p_*(q^*(\alpha)\cdot\gamma_g).$$

\begin{prop}
\label{cohomiso}
Let $S$ be a K3 surface defined over an arbitrary field $k$ and $v\in N(S)$ an effective and geometrically primitive Mukai vector of length $v^2>0$ with a $v$-generic polarization $H\in \mathrm{NS}(S)$. Let $g\colon \widetilde{H}(\overline{S},\Q_\ell)\to\widetilde{H}(\overline{S},\Q_\ell)$ denote the Galois-equivariant isometry produced in the proof of Proposition \ref{perps}. Then the map $\gamma_g\colon H^*(\overline{M}, \Q_\ell)\to H^*(\overline{S}^{[n]}, \Q_\ell)$ is a Galois equivariant ring isomorphism.
\end{prop}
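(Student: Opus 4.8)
The plan is to transfer Markman's theorem \cite[Sec.\ 3.4]{Ma2}, which is stated for a complex projective K3 surface and an integral isometry of the Mukai lattice, to the setting of $\ell$-adic cohomology over $\overline{k}$ by a base-change and density argument. The crucial point is that the construction of $\gamma_g$ is purely formal: it is built by pullback, pushforward, cup product, the polynomial map $l$, and the top Chern class, applied to the classes $u_v$, $u_w$, and the isometry $g$. Each of these operations is compatible with the various comparison isomorphisms one has at one's disposal (the comparison between singular and \'etale cohomology over $\C$; smooth base change; the specialization isomorphism of \cite[VI.4.2]{Mi} and \cite[VI.4.1]{Mi} for the relative moduli space over $\Spec W'$). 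So the real content is to arrange things so that $g$, $u_v$, $u_w$ — and hence $\gamma_g$ — are all visible simultaneously in one cohomology group that maps isomorphically to the complex analytic one, and then to argue that an isomorphism downstairs forces an isomorphism upstairs.

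First I would reduce, exactly as in the proof of Proposition \ref{moduli} and Proposition \ref{H^2iso}, to three cases according to $\mathrm{char}\,k$. In characteristic zero the Lefschetz principle lets us descend $S$, $M$, and all the relevant data (the quasi-universal sheaf $\mathcal{U}$, the polarization, $c_1$) to a field $k'$ with embeddings $\overline{k'}\hookrightarrow \C$ and $\overline{k'}\hookrightarrow\overline{k}$; smooth base change then gives horizontal isomorphisms $H^*(\overline{M},\Q_\ell)\xleftarrow{\sim}H^*(M_{\overline{k'}},\Q_\ell)\xrightarrow{\sim}H^*(M_\C,\Q_\ell)$, and similarly for $\overline{S}^{[n]}$ and for the Mukai lattices, under which $u_v$, $u_w$, $g$, and therefore $\gamma_g$ correspond. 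Over $\C$, comparison with singular cohomology and Markman's theorem give that $\gamma_g$ is a ring isomorphism; by the commutativity of all the squares it is then an isomorphism over $\overline{k'}$ and over $\overline{k}$. In positive characteristic I would pass to the relative moduli space $\mathcal{M}=\mathcal{M}_{\mathcal{H}}(\mathcal{S},v_{W'})\to\Spec W'$ constructed in the proof of Proposition \ref{H^2iso}, which is smooth and projective with special fiber $\overline{M}$, together with the relative quasi-universal sheaf $\widetilde{\mathcal{U}}$; there is a corresponding relative $\overline{S}^{[n]}$ and a relative class $\widetilde{\gamma}_g\in H^{4n}(\mathcal{M}\times_{W'}\mathcal{S}^{[n]},\Q_\ell(2n))$ restricting to $\gamma_g$ on both geometric fibers. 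Smooth base change (together with \cite[VI.4.2]{Mi}) identifies $\gamma_g$ over the closed fiber $\overline{M}$ with $\gamma_g$ over the generic geometric fiber $M_{\overline{K}}$, $K=\mathrm{Frac}\,W'$, which is characteristic zero, and the previous case applies.

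The Galois equivariance is comparatively easy and I would handle it at the end, uniformly: all the ingredients — $v(\mathcal{U})$, $\mathrm{td}\,M$, $g$ (which is the explicit reflection of Proposition \ref{perps}, built from Galois-invariant classes since $v$, $w$, and the auxiliary ample class $h$ are all defined over $k$) — are defined over $k$, so $\gamma_g$ descends to a class in $H^{4n}(M\times S^{[n]},\Q_\ell(2n))$ over $k$, whence the induced map on $\overline{k}$-cohomology commutes with the $\mathrm{Gal}(\overline{k}/k)$-action. The one genuine subtlety — and the step I expect to be the main obstacle — is the passage from "$\gamma_g$ is induced by a class defined over $\Q_\ell$" to "$\gamma_g$ is an isomorphism", since a priori Markman only gives us an integral class over $\Z$, while the reflection $g$ we use is only rational. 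Here one uses the density argument hinted at in the text: the set of isometries $g'$ of $\widetilde{H}(\overline{S},\Q_\ell)$ with $g'(v)=w$ for which $\gamma_{g'}$ is a ring isomorphism is Zariski-open (being an isomorphism of finite-dimensional graded algebras is an open condition on the entries of the matrix of $\gamma_{g'}$, which depend polynomially on those of $g'$), and it is non-empty because it contains the integral isometries coming from Markman's setup after the comparison isomorphism; since a non-empty Zariski-open subset of the relevant variety of isometries is dense and in particular contains our reflection, $\gamma_g$ is an isomorphism. Making the "variety of isometries $g'$ with $g'(v)=w$" precise (it is a torsor under the orthogonal group of $v^\perp$, hence geometrically irreducible, so a non-empty open is dense) and checking that the integral isometries are Zariski-dense in it — or, alternatively, that at least one integral isometry lies in the same connected component as our $g$ — is the technical heart of the argument.
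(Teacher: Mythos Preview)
Your overall architecture matches the paper's: reduce to $k=\C$ via the Lefschetz principle in characteristic zero and the lift to $W'$ in positive characteristic, invoke Markman over $\C$, and deduce Galois equivariance from the $k$-rationality of $u_v$, $u_w$, and the reflection $g$. The point of departure is the density argument, and there your version has two genuine gaps.

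First, the condition ``$\gamma_{g'}$ is a ring isomorphism'' is \emph{not} Zariski-open in $I=\mathrm{Isom}(\widetilde{H}(S),v,w)$. Being a graded \emph{algebra homomorphism} is cut out by polynomial equations (it is closed), while being a linear isomorphism is open; their intersection is only locally closed. Second, even granting openness, ``non-empty open in an irreducible variety is dense'' does not give you ``contains our specific reflection $g$''; a dense open can miss any prescribed point. (And $I$ is not irreducible anyway: it is a torsor under $\mathrm{O}(v^\perp)$, which has two components.) So as written the argument does not pin down $\gamma_g$.

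The paper repairs this by flipping to \emph{closed} conditions. One shows that
\[
Z_1=\{g'\in I:\gamma_{g'}\text{ is a ring homomorphism}\}\quad\text{and}\quad Z_2=\{g'\in I:\gamma_{{g'}^{-1}}\gamma_{g'}=\mathrm{Id}\}
\]
are Zariski-closed in $I$ (the second because $g'\mapsto\gamma_{{g'}^{-1}}\gamma_{g'}$ is algebraic and ``equals $\mathrm{Id}$'' is an equation). Markman's \cite[Thm.\ 3.10, Lem.\ 3.12]{Ma2} place every $\Z$-point of $I$ in $Z_1\cap Z_2$. Then one proves $I(\Z)$ is Zariski-dense in $I$ --- this is your ``technical heart'', and it is done via the Borel Density Theorem applied to $\mathrm{Stab}(v)^\circ_\R\cong\mathrm{SO}(3,20)^\circ$, together with an explicit $\Z$-point of determinant $-1$ to catch the second component. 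A closed set containing a dense set is everything, so $Z_1=Z_2=I$; in particular $\gamma_g$ is a ring homomorphism with two-sided inverse $\gamma_{g^{-1}}$. The remainder of your outline (comparison, base change, Galois equivariance) then goes through exactly as you describe.
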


\begin{remark}
Note that to prove Theorem \ref{mainthm}, $\gamma_g$ just needs to be a Galois-equivariant vector space isomorphism. The fact that $\gamma_g$ is also a ring isomorphism is independently interesting, especially when contrasted with Remark \ref{notringiso}.
\end{remark}

\begin{proof}
We begin by assuming that $k=\C$ and the cohomology is singular cohomology. Let $I:=\mathrm{Isom}(\widetilde{H}(S), v, w)$ be the subvariety of $\A^{24\times 24}_{\Q_\ell}$ consisting of isometries $\widetilde{H}(S) \to \widetilde{H}(S)$ which send $v$ to $w$. Similarly, let $\Hom(H^*(M), H^*(S^{[n]}))$ be the affine variety of graded vector space homomorphisms from $H^*(M)$ to  $H^*(S^{[n]})$. Then we get a map of varieties 
$$\Psi\colon I \to \Hom(H^*(M), H^*(S^{[n]}))$$
sending an isometry $g$ to the map $\gamma_g$ defined above. Consider the subspace $Z$ of $I$ containing all those isometries $g$ such that $\gamma_g$ is a ring homomorphism. We will show that $Z=I$. Observe that $\gamma_g$ being a ring homomorphism is a closed condition so $\Psi(Z) \subset \Hom(H^*(M), H^*(S^{[n]}))$ is closed. Since $Z$ is the preimage under $\Psi$ of a closed subspace, $Z\subset I$ is closed.

Given a $\Z$-point $g\colon \widetilde{H}(S, \Z)\to \widetilde{H}(S, \Z)$ of $I$, by \cite[Thm.\ 3.10]{Ma2} the map $\gamma_g \colon H^*(M,\Z)\to H^*(S^{[n]},\Z)$ is a ring homomorphism, so $Z$ contains all of the $\Z$-points of $I$, $I(\Z)$. By Lemma \ref{BDT} below, we see that the $\Z$-points of $I$ are Zariski dense in $I$, i.e $\overline{I(\Z)}=I$. Since $Z$ is closed, $\overline{I(\Z)}=I\subset Z$. Thus, we conclude that every morphism $\gamma_g$ for $g\in I$ is a ring homomorphism.

Next, we claim that in fact every homomorphism in Im$(\Psi)$ is a ring isomorphism. 
We consider the algebraic map
$$I \to \Hom(H^*(M), H^*(M))$$
sending $g \mapsto \gamma_{g^{-1}} \gamma_g$, where $\gamma_{g^{-1}}$ is defined analogously to $\gamma_g$ for $g^{-1}\in \mathrm{Isom}(\widetilde{H}(S), w, v)$.  Then the subspace 
$$\left\{g : \gamma_{g^{-1}} \gamma_g -\mathrm{Id}=0\right\}\subset I$$
is again closed because it is the preimage of a closed subspace in $\Hom(H^*(M), H^*(M))$. When $g$ is a $\Z$-point of $I$, by \cite[Lem.\ 3.12]{Ma2} we know that $\gamma_{g^{-1}} \gamma_g=\gamma_{g^{-1}g}=\gamma_{\mathrm{Id}}=\mathrm{Id}$. Thus this closed subspace contains all of the $\Z$-points of $I$. Again using Lemma \ref{BDT}, the $\Z$-points of $I$ are Zariski dense in $I$, so we conclude that $\gamma_{g^{-1}} \gamma_g = \mathrm{Id}$ for all $g\in I$. The same argument shows that $\gamma_{g} \gamma_{g^{-1}} = \gamma_{gg^{-1}}=\mathrm{Id}$, and hence every such $\gamma_g$ is an isomorphism. In particular, the isometry $g$ constructed in Proposition \ref{perps} is a $\Q_\ell$-point of $I$ and therefore $\gamma_g$ is a ring isomorphism. Lastly, the comparison theorem for singular and \'etale cohomology gives the ring isomorphism on \'etale cohomology, $\gamma_g\colon H^*(M, \Q_\ell)\xrightarrow{\sim} H^*(S^{[n]}, \Q_\ell)$. 

For $k$ an arbitrary field, we proceed as in the proof of Proposition \ref{H^2iso}, using the Lefschetz principle and the lifting argument for fields of characteristic zero and $p>0$, respectively, to conclude that $\gamma_g$ remains a ring isomorphism.

To see that $\gamma_g$ is Galois equivariant, we observe that both $u_v$ and $u_w$ are Galois invariant, and all of the other operations in the construction of the class $\gamma_g$, including the choice of isometry $g$, are Galois equivariant. Hence the class $\gamma_g$ is invariant under the Galois action, and the resulting morphism is equivariant.
\end{proof}

\begin{lemma}
\label{BDT}
Using the notation introduced in the proof of Proposition \ref{cohomiso}, the $\Z$-points of the variety $\mathrm{Isom}(\widetilde{H}(S), v, w)$ in $\A_{\Q_\ell}^{24\times 24}$ are Zariski dense.
\end{lemma}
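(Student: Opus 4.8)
The plan is to realize $I:=\mathrm{Isom}(\widetilde{H}(S),v,w)$ as a torsor under an orthogonal group and then invoke the Borel density theorem for arithmetic groups. Write $\Lambda:=\widetilde{H}(S,\Z)$; this is the even unimodular lattice of signature $(4,20)$. Since $v$ is geometrically primitive it is primitive in $N(S)$, which is a primitive sublattice of $\Lambda$, so $v$ is a primitive vector of $\Lambda$, with $v^2=w^2=2n-2>0$; also $w=(1,0,1-n)$ is primitive. All of $I$, the group scheme $\mathrm{O}(\Lambda)$ and the subgroup scheme $\mathrm{Stab}_{\mathrm{O}(\Lambda)}(v)$ are naturally defined over $\Z$ inside $\A^{24\times 24}$, and $I(\Z)$ is the set of integral isometries of $\Lambda$ carrying $v$ to $w$.

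First I would exhibit an integral point of $I$. Because $\Lambda$ is indefinite and unimodular, its orthogonal group acts transitively on primitive vectors of any fixed square (Eichler, Wall), so there is an integral isometry $g_0\colon\Lambda\to\Lambda$ with $g_0(v)=w$; thus $g_0\in I(\Z)$. This $g_0$ carries $v^\perp\cap\Lambda$ isometrically onto $w^\perp\cap\Lambda=H^2(\overline{S},\Z_\ell(1))\oplus\langle -v^2\rangle$, which contains vectors of square $-2$; pulling one back along $g_0$ gives $x\in L:=v^\perp\cap\Lambda$ with $x^2=-2$, necessarily primitive, so that the reflection $r_x$ is an integral isometry of $\Lambda$ fixing $v$ and of determinant $-1$. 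In particular $I(\Z)$ meets both connected components of $I$, which are separated by the determinant.

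Next I would exploit the torsor structure. Left multiplication by $g_0\in\GL_{24}(\Z)$ is an isomorphism of $\Q_\ell$-schemes $\mathrm{Stab}_{\mathrm{O}(\Lambda)}(v)_{\Q_\ell}\xrightarrow{\sim}I_{\Q_\ell}$ that sends $\Gamma:=\mathrm{Stab}_{\mathrm{O}(\Lambda)}(v)(\Z)$ into $g_0\Gamma\subseteq I(\Z)$ and carries Zariski closures to Zariski closures, so it suffices to prove $\Gamma$ is Zariski dense in $\mathrm{Stab}_{\mathrm{O}(\Lambda)}(v)_{\Q_\ell}$. Since $v^2\neq 0$, restriction to $v^\perp$ identifies $\mathrm{Stab}_{\mathrm{O}(\Lambda)}(v)_\Q$ with the orthogonal group $\mathrm{O}(L)_\Q$ of the rank-$23$ quadratic space $L\otimes\Q$ of signature $(3,20)$; under this identification $\Gamma$ corresponds to a subgroup containing the determinant $-1$ element $r_x$ and, after intersecting with $\mathrm{SO}(\Lambda)$, a finite-index subgroup of the arithmetic group $\mathrm{SO}(L)(\Z)$. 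By the theorem of Borel and Harish-Chandra the latter is a lattice in the semisimple Lie group $\mathrm{SO}(L)(\R)=\mathrm{SO}(3,20)$, which has no compact factors, so by the Borel density theorem it is Zariski dense in the connected group $\mathrm{SO}(L)$; a finite-index subgroup of a Zariski-dense subgroup of a connected group is still Zariski dense, and Zariski density of a set of $\Q$-rational points persists under the flat base change $\Q\hookrightarrow\Q_\ell$. Hence $\Gamma\cap\mathrm{SO}(\Lambda)$ is Zariski dense in $\mathrm{SO}(L)_{\Q_\ell}$, and adjoining $r_x$ (which lies in the other component) shows $\Gamma$ is Zariski dense in $\mathrm{O}(L)_{\Q_\ell}\cong\mathrm{Stab}_{\mathrm{O}(\Lambda)}(v)_{\Q_\ell}$. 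Therefore $I(\Z)\supseteq g_0\Gamma$ is Zariski dense in $I_{\Q_\ell}$.

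The one substantive input is the Borel density theorem; everything else — transitivity of $\mathrm{O}(\Lambda)$ on primitive vectors of fixed square, the resulting shape of $v^\perp$, and the reduction theory making $\mathrm{SO}(L)(\Z)$ a lattice — is classical. Thus the main obstacle is not a deep one but an organizational one: carefully passing between $\Lambda$, its orthogonal group, the stabilizer of $v$, and the two connected components, and checking that Zariski density of the relevant arithmetic group transfers to Zariski density of $I(\Z)$ inside $I_{\Q_\ell}$.
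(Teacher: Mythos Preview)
Your proposal is correct and follows essentially the same route as the paper: reduce via the torsor structure to Zariski density of $\mathrm{Stab}(v)(\Z)$ in the orthogonal group of $v^\perp$, apply Borel density to the connected component $\mathrm{SO}(3,20)^\circ$, and handle the other component with an explicit determinant $-1$ element. The only cosmetic differences are that you make the nonemptiness of $I(\Z)$ explicit via Eichler's transitivity criterion (the paper leaves this implicit in its identification $v^\perp\cong(-E_8)^{\oplus 2}\oplus U^{\oplus 3}\oplus\langle 2-2n\rangle$) and you pass to $\Q_\ell$ by flat base change from $\Q$ rather than by choosing an embedding $\Q_\ell\hookrightarrow\C$ as the paper does.
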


\begin{proof}
First we consider $I:=\mathrm{Isom}(\widetilde{H}({S}),v,w)\subset \A_\C^{24\times 24}$. We will show that $\overline{I(\Z)}=I$. Then under a choice of inclusion $\Q_\ell\hookrightarrow \C$, this will give $I(\Q_\ell)\subset \overline{I(\Z)}$, completing the proof.

Consider $I_\R\subset \A_\R^{24\times 24}$, which is a torsor over 
$$\mathrm{Stab}(v)_\R:=\{A\in \mathrm{O}(\widetilde{H}(S)) : Av=v\}\subset \A_\R^{24\times 24},$$ 
and observe that $\mathrm{Stab}(v)_\R$ is isomorphic to $\mathrm{O}(v^\perp)\cong \mathrm{O}(3,20)\subset \A_\R^{23\times 23}$ as group schemes over $\Q$. By \cite[Thm.\ 5.1.11]{Morris}, the $\Z$-points, which we will write as $\mathrm{Stab}(v)_\Z$, form a lattice in $\mathrm{Stab}(v)_\R$. We claim that this lattice is Zariski dense. To see this, we will apply the Borel Density Theorem given in \cite[Cor.\ 4.5.6]{Morris}, which requires a connected, semisimple Lie group $G$ and a lattice $\Gamma$ that projects densely into the maximal compact factor of $G$. Let $G:=\mathrm{Stab}(v)_\R^\circ$, the connected component of the identity, and $\Gamma:=\mathrm{Stab}(v)_\Z\cap G$. Since $\mathrm{Stab}(v)_\R\cong \mathrm{O}(3,20)$, it follows that $G\cong \mathrm{SO}(3,20)^\circ$, which is a simple Lie group by \cite[Ex.\ A2.3(2)]{Morris}. Then by \cite[Ex.\ 4.3.2]{Morris} and \cite[Exercise 4.3\#1]{Morris}, $\Gamma$ projects densely into the maximal compact subgroup of $G$. Thus by \cite[Cor.\ 4.5.6]{Morris}, $\overline{\Gamma}=G$. Therefore ${\mathrm{Stab}(v)_\R^\circ}\subset \overline{\mathrm{Stab}(v)_\Z}$. Recall that $$v^\perp\cong (-E_8)^{\oplus 2}\oplus U^{\oplus 3}\oplus \langle 2-2n\rangle,$$ 
so $\mathrm{Stab}(v)_\Z$ also contains a point of determinant $-1$. The smallest algebraic group containing both $\mathrm{Stab}(v)_\R^\circ$ and this point of determinant $-1$ is $\mathrm{Stab}(v)_\R$, since $\mathrm{O}(3,20)$ is generated by $\mathrm{SO}(3,20)$ and an element of determinant $-1$. Thus in fact $\mathrm{Stab}(v)_\R\subset \overline{\mathrm{Stab}(v)_\Z}$.

Finally, we observe that $\mathrm{Stab}(v)_\R$ is Zariski dense in its complexification $\mathrm{Stab}(v)_\C\subset \A_\C^{24\times 24}$ (see \cite[Rmk.\ 18.1.8(3)]{Morris}), and so $\mathrm{Stab}(v)_\Z$ is Zariski dense in $\mathrm{Stab}(v)_\C\cong \mathrm{O}(23,\C)$. Since $I$ is a torsor over $\mathrm{Stab}(v)_\C$, when we consider $I\subset \A_\C^{24\times 24}$, we see that $\overline{I(\Z)}=I$.  
\end{proof}

\bibliographystyle{plain}
\bibliography{bibliography}

\vspace{.2cm}

\noindent\scriptsize{{\it E-mail address}: \texttt{sarah.frei@rice.edu}}

\vspace{.2cm}

\noindent\scriptsize{\textsc{Department of Mathematics, Rice University, Herman Brown Hall, Houston, TX, 77005, United States}}

\end{document}